\newtheorem{thm}{Theorem}
\newtheorem{cor}{Corollary}
\newtheorem{prop}{Proposition}
\newtheorem{defi}{Definition}
\def\beq{\begin{equation}}\def\eeq{\end{equation}}
\def\beqn{\begin{eqnarray}}\def\eeqn{\end{eqnarray}}
\def\qed{\ifhmode\unskip\nobreak\fi\quad\ifmmode\Box\else$\Box$\fi}
\newcommand\mbf[1]{\mbox{\boldmath$#1$}}
\title{Shannon capacity and the categorical product}
\author{{\bf G\'abor Simonyi}$^{a,b,}$\thanks{Research partially supported by the
    National Research, Development and Innovation Office (NKFIH)
    grants K--116769 and K--120706.
}
\\ \\
$^a$Alfr\'ed R\'enyi Institute of Mathematics\\ \\
$^b$Department of Computer Science and Information Theory,\\ Budapest University of Technology and Economics\\ \\
 {\tt simonyi@renyi.hu}}
\date{}
\begin{document}
\maketitle

\begin{abstract}
Shannon OR-capacity $C_{\rm OR}(G)$ of a graph $G$, that is the traditionally
more often used Shannon AND-capacity of the complementary graph, is a
homomorphism monotone graph parameter satisfying $C_{\rm OR}(F\times
G)\le\min\{C_{\rm OR}(F),C_{\rm OR}(G)\}$ for every pair of graphs, where $F\times G$ is the categorical product of graphs $F$ and $G$. Here we
initiate the study of the question when could we expect equality in this
inequality. Using a strong recent result of Zuiddam, we show that if this
"Hedetniemi-type" equality is not satisfied for some pair of graphs then
the analogous equality is also not satisfied for this graph pair by some
other graph invariant that has a much ``nicer'' behavior concerning some
different graph operations. In particular,
unlike Shannon capacity or the chromatic number, this other invariant is both
multiplicative under
the OR-product and additive under the join operation, while it is also
nondecreasing along graph homomorphisms.
We also present a natural lower bound on $C_{\rm OR}(F\times G)$ and elaborate
on the question of how to find graph pairs for which it is known to be
strictly less, than the upper bound $\min\{C_{\rm OR}(F),C_{\rm OR}(G)\}$. We
present such graph pairs using the properties of Paley graphs.

\end{abstract}

\section{Introduction}
\message{Introduction}

For two graphs $F$ and $G$, their categorical (also called tensor or weak direct) product $F\times G$ is defined by $$V(F\times G)=V(F)\times V(G),$$
$$E(F\times G)=\{\{(x,u),(y,v)\}: x,y\in V(F), u,v\in V(G), \{x,y\}\in E(F), \{u,v\}\in E(G)\}.$$

Hedetniemi's more than half a century old conjecture that was refuted
recently by Yaroslav Shitov \cite{YS} (cf. also \cite{TZ} and \cite{HW} for
further developments) stated that the chromatic
number of $F\times G$ would be equal to the smaller of the chromatic numbers of $F$ and
$G$, i.e., that $$\chi(F\times G)=\min\{\chi(F),\chi(G)\}.$$
It is easy to see that the right hand side above is an upper bound on the left
hand side. If $c:V(F)\to \{1,2,\dots,\chi(F)\}$
is a proper coloring of $F$ then $c': (x,u)\mapsto c(x)$ is a proper
coloring of $F\times G$ proving $\chi(F\times G)\le \chi(F)$. As the same
argument works if we start with a proper coloring of $G$, this proves the
claimed inequality. Thus the real content of the conjecture was that the right
hand side is also a lower bound on $\chi(F\times G)$. Though not true in
general, this holds in several special cases. In particular, it is easy to
prove when $\min\{\chi(F),\chi(G)\}\le 3$ and it is also known to hold when
this value is $4$. The latter, however, is a highly nontrivial result of El
Zahar and Sauer
\cite{EZS} and the general case was wide open until the already mentioned
recent breakthrough by Shitov \cite{YS}. For several related results, see the survey papers \cite{Sauersur, Tardsur, Zhusur}.

\medskip\par\noindent
A map $f:V(T)\to V(H)$ between the vertex sets of graphs $T$ and $H$ is called
a graph
homomorphism if it preserves edges, that is, if $f$ satisfies
$\{a,b\}\in E(T)\Rightarrow \{f(a),f(b)\}\in E(H)$. The existence of a graph
homomorphism from $T$ to $H$ is denoted by $T\to H$.

\medskip\par\noindent
Behind the validity of the inequality
$\chi(F\times G)\le\min\{\chi(F),\chi(G)\}$ is the fact that both
$F\times G\to F$ and $F\times G\to G$ holds (just take the projection maps),
while it is generally true, that $T\to H$ implies $\chi(T)\le \chi(H)$.

\medskip\par\noindent
This suggests that if $p(G)$ is any graph parameter which is monotone nondecreasing under
graph homomorphism, that is for which $T\to H$ implies $p(T)\le p(H)$,
then an analogous question to Hedetniemi's conjecture is meaningful for it:
we automatically have $p(F\times G)\le\min\{p(F),p(G)\}$ and one may ask
whether equality holds. (If it does, we will say that $p$ satisfies the Hedetniemi-type equality.)

\medskip\par\noindent
Theorems of this kind already exist. A famous example is Zhu's celebrated result known as the
fractional version of Hedetniemi's conjecture \cite{Zhufrac}. It shows
equality above in case $p(G)$ is the fractional chromatic number. More
recently Godsil, Roberson, \v{S}amal, and Severini \cite{GRSS} proved a
similar result for the Lov\'asz theta number of the complementary graph and
also investigated the analogous question for a closely related parameter
called vector chromatic number introduced in \cite{KMS}.
(The vector chromatic number is different from the {\em strict vector chromatic
number} which is known to be identical to the Lov\'asz theta number of the
complementary graph, cf. \cite{GRSS}, \cite{LLGG}.)
They conjectured that the Hedetniemi-type
equality also holds for this parameter and proved it in special cases. In a
follow-up paper by Godsil, Roberson, Roomey, \v{S}amal, and Varvitsiotis
\cite{GRRSV} the latter conjecture is proved in general as well.

Both the fractional chromatic number and the Lov\'asz theta number of the
complementary graph are well-known upper bounds on the Shannon OR-capacity of
the graph which is the usual Shannon capacity, or Shannon AND-capacity of the
complementary graph. This is also a homomorphism-monotone parameter, so the
Hedetniemi-type question is meaningful for it. In this paper we initiate the
study of this question. Given the very complex behavior of Shannon capacity
there seems to be little reason to believe that the Shannon OR-capacity would
also satisfy the analogous equality. However, if one has to argue why it
looks unlikely, then the first argument that comes to mind is that
Shannon OR-capacity famously does not satisfy two other simple equalities that
both the fractional chromatic number and the Lov\'asz theta number of the
complementary graph do. (For more details of this, see the next section.)
Our main observation is that this argument is rather weak. This will be a
consequence of a strong recent result by Jeroen Zuiddam \cite{Zuiddam}.

We will also elaborate on the question of finding graph pairs that provide
potential counterexamples to
the mentioned equality. This turns out to be challenging as well, largely
because of the lack of knowledge about the general behaviour of Shannon
capacity. We will give a natural lower bound on $C_{\rm OR}(F\times G)$ in Section~\ref{sect:lbitc}.
If we want to "test" whether Shannon OR-capacity satisfies the Hedetniemi-type
equality in nontrivial cases, we need some graph pairs $(F,G)$, for which our
lower bound is strictly smaller than the upper bound
$\min\{C_{\rm OR}(F), C_{\rm OR}(G)\}$.
Since the Shannon capacity value is not known in too many nontrivial cases, finding such graph pairs is not entirely trivial. We will present some graph pairs with this property in the second subsection of Section~\ref{sect:lbitc}.

\section{Shannon OR-capacity} \label{sect:shanor}

The Shannon capacity of a graph involves a graph product which is different of
the categorical product that appears in Hedetniemi's conjecture. In fact,
traditionally, that is, in Shannon's original and in some subsequent
papers, see \cite{Sha56, LL79}, it is
defined via a product that is often called the AND-product,
cf. e.g. \cite{AO}. Sometimes it is more convenient, however, to define graph
capacity in a complementary way, cf. e.g. \cite{CsK} (see Definition 11.3). The graph product involved then is the OR-product and the resulting notion is equivalent to the previous one defined for the complementary graph. To avoid confusion, we will call these two notions Shannon AND-capacity and Shannon OR-capacity, the latter being the one we will mostly use.

\begin{defi}\label{defi:prod}
Let $F$ and $G$ be two graphs. Both their AND-product $F\boxtimes G$ and OR-product $F\cdot G$ is defined on the Cartesian product $V(F)\times V(G)$ as vertex set.
The edge set of the OR-product is given by
$$E(F\cdot G)=\{\{(f,g),(f',g')\}: f,f'\in V(F), g,g'\in V(G), \{f,f'\}\in E(F)\ {\rm or}\  \{g,g'\}\in E(G)\}.$$
On the other hand, the edge set of the AND-product is given by
$$E(F\boxtimes G)=\{\{(f,g),(f',g')\}: f,f'\in V(F), g,g'\in V(G),$$$$ \{f,f'\}\in E(F)\ {\rm and}\ \{g,g'\}\in E(G),\ {\rm or}\ f=f', \{g,g'\}\in E(G),\ {\rm or}\ \{f,f'\}\in E(F), g=g' \}.$$
We denote the $t$-fold OR-product of a graph $G$ with itself by $G^t$, while the $t$-fold AND-product of $G$ with itself will be denoted by $G^{\boxtimes t}$.
\end{defi}

Denoting the complementary graph of a graph $H$ by $\overline{H}$, note that the above definitions imply that $\overline{F\cdot G}=\overline{F}\boxtimes\overline{G}$. In particular, $\omega(G^t)=\alpha(\overline{G^t})=\alpha(\overline{G}^{\boxtimes t})$, where $\omega(H)$ and $\alpha(H)$ denote the clique number and the independence number of graph $H$, respectively.

\begin{defi}\label{defi:Shcap}
The Shannon OR-capacity of a graph $G$ is defined as the always existing limit
$$C_{\rm OR}(G):=\lim_{t\to\infty}\sqrt[t]{\omega(G^t)}.$$
The Shannon AND-capacity is equal to
$C_{\rm AND}(G):=\lim_{t\to\infty}\sqrt[t]{\alpha(G^{\boxtimes t})}=C_{\rm OR}(\overline{G}).$
\end{defi}

We remark, that in information theory Shannon capacity is often defined as the
logarithm of the above values (to emphasize its operational meaning), but we
will omit logarithms as it is more customarily done in combinatorial
treatments. We also note, that all graphs in our discussions are meant to be
simple.


\begin{prop} \label{prop:hommon}
If $G$ and $H$ are two graphs such that $G\to H$, then $C_{\rm OR}(G)\le C_{\rm OR}(H).$
\end{prop}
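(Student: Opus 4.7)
The plan is to show that a homomorphism $G \to H$ lifts to a homomorphism $G^t \to H^t$ of the $t$-fold OR-products, which forces the clique numbers $\omega(G^t) \le \omega(H^t)$ for every $t$, and then to pass to the limit in Definition~\ref{defi:Shcap}.

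First I would record the elementary fact that graph homomorphisms do not decrease the clique number: if $f : T \to H$ is a homomorphism and $v_1,\dots,v_k$ is a clique in $T$, then the images $f(v_i)$ are pairwise adjacent in $H$, and since $H$ is simple the edge relation forces the images to be pairwise distinct, so they form a clique of size $k$ in $H$. Hence $\omega(T) \le \omega(H)$.

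Next I would lift $f : G \to H$ to the coordinatewise map $\tilde f : V(G^t) \to V(H^t)$ given by $\tilde f(v_1,\dots,v_t) = (f(v_1),\dots,f(v_t))$ and verify that it is a homomorphism $G^t \to H^t$. This is immediate from Definition~\ref{defi:prod}: if $(v_1,\dots,v_t)$ and $(u_1,\dots,u_t)$ are adjacent in $G^t$, then by definition of the OR-product there is some coordinate $i$ with $\{v_i,u_i\}\in E(G)$; applying $f$, we get $\{f(v_i),f(u_i)\}\in E(H)$, and so the images are adjacent in $H^t$. Combining with the previous step, $\omega(G^t) \le \omega(H^t)$ for every positive integer $t$.

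Taking $t$-th roots and letting $t\to\infty$, Definition~\ref{defi:Shcap} gives
\[
C_{\rm OR}(G) = \lim_{t\to\infty}\sqrt[t]{\omega(G^t)} \le \lim_{t\to\infty}\sqrt[t]{\omega(H^t)} = C_{\rm OR}(H),
\]
which is the desired inequality. There is no real obstacle here; the only point that needs a moment's care is to verify that the coordinatewise lift respects the OR-product adjacency (one edge in a single coordinate suffices), but this is precisely what makes the OR-product the "right" product for this monotonicity argument, in contrast to the categorical product, where a single coordinate is not enough.
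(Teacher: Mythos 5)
Your proof is correct and follows essentially the same route as the paper's: lift the homomorphism coordinatewise to $G^t \to H^t$, use simplicity to conclude $\omega(G^t)\le\omega(H^t)$, and take $t$-th roots in the limit. You merely make explicit (as a separate step) the standard fact that homomorphisms into simple graphs do not decrease the clique number, which the paper folds into a single sentence.
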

\begin{proof}
Let $f:V(G)\to V(H)$ be a graph homomorphism and ${\mbf a}=a_1a_2\dots a_t,
{\mbf b}=b_1b_2\dots b_t$ be two adjacent vertices of $G^t$. Then for some $i$
we have $\{a_i,b_i\}\in E(G)$ implying $\{f(a_i),f(b_i)\}\in E(H)$ and thus
$\{f(a_1)f(a_2)\dots f(a_t),f(b_1)f(b_2)\dots f(b_t)\}\in E(H^t)$. Since our
graphs are simple, this implies $\omega(G^t)\le\omega(H^t)$ for every $t$ and thus the statement.
\end{proof}

\begin{cor}\label{cor:cator}
$$C_{\rm OR}(F\times G)\le \min\{C_{\rm OR}(F), C_{\rm OR}(G)\}.$$
\end{cor}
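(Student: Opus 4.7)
The plan is to derive the corollary as an essentially immediate consequence of Proposition~\ref{prop:hommon}, by exhibiting graph homomorphisms from $F\times G$ to each of $F$ and $G$. This is the same mechanism that gives the analogous upper bound $\chi(F\times G)\le\min\{\chi(F),\chi(G)\}$ discussed in the introduction, and it is the standard way any homomorphism-monotone parameter inherits such an inequality.

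Concretely, first I would verify that the two coordinate projections $\pi_F:V(F\times G)\to V(F)$, $(x,u)\mapsto x$ and $\pi_G:V(F\times G)\to V(G)$, $(x,u)\mapsto u$ are graph homomorphisms. This is immediate from the definition of $E(F\times G)$ given at the start of the introduction: if $\{(x,u),(y,v)\}\in E(F\times G)$ then by definition $\{x,y\}\in E(F)$ and $\{u,v\}\in E(G)$, so in particular $\{\pi_F(x,u),\pi_F(y,v)\}=\{x,y\}\in E(F)$, and analogously for $\pi_G$. Thus $F\times G\to F$ and $F\times G\to G$.

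Second, I would apply Proposition~\ref{prop:hommon} to each of these homomorphisms to conclude
$$C_{\rm OR}(F\times G)\le C_{\rm OR}(F)\quad\text{and}\quad C_{\rm OR}(F\times G)\le C_{\rm OR}(G),$$
and then take the minimum of the two right-hand sides to obtain the stated inequality.

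There is no real obstacle here; the corollary is a one-line consequence of homomorphism-monotonicity once the projections are observed to preserve edges. The only slight subtlety worth flagging, more to reassure the reader than because the argument requires care, is that one must use the categorical product $F\times G$ (not, e.g., the OR-product $F\cdot G$), since it is precisely for $\times$ that both projections are homomorphisms.
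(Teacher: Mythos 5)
Your proof is correct and follows essentially the same route as the paper: observe that the coordinate projections are graph homomorphisms from $F\times G$ to $F$ and to $G$, then apply Proposition~\ref{prop:hommon} to each and take the minimum. The only difference is that you spell out explicitly why the projections preserve edges, whereas the paper refers back to the Introduction for that observation.
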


\begin{proof}
The claimed inequality follows from our discussion in the Introduction: since the appropriate projection maps define graph homomorphisms from $F\times G$ to $F$ and $G$, respectively, we have $F\times G\to F$ and $F\times G\to G$. By Proposition~\ref{prop:hommon} this implies the statement.
\end{proof}

Thus the following question is indeed valid: For what graphs $F$ and $G$ do we have equality in Corollary~\ref{cor:cator}? We elaborate on this question in the next two sections.

\section{On the possibilities of equality}\label{sect:canitbe}

If one is asked whether believing in equality in Corollary~\ref{cor:cator}
sounds plausible, then the most natural reaction seems to be to say ``no'' based
mainly on the fact that the answer to two somewhat similar questions is
negative, though neither is trivial. These two questions are the following.

Lov\'asz asked in his celebrated paper \cite{LL79}, whether Shannon
OR-capacity is multiplicative with respect to the OR-product. i.e.,
whether $$C_{\rm OR}(F\cdot G)=C_{\rm OR}(F)C_{\rm OR}(G)$$ holds for all
pairs of graphs $F$ and $G$. (Formally the
question was asked in the complementary language, but its mathematical content
was equivalent to this.) This was answered in the negative by Haemers in
\cite{Haemers}.

The second question is from Shannon's paper \cite{Sha56} and to present it in
our language we need the notion of {\em join} of two graphs.
\begin{defi}
The join $F\oplus G$ of graphs $F$ and $G$ has the disjoint union of $V(F)$ and
$V(G)$ as vertex set and its edge set is given by
$$E(F\oplus G)=E(F)\cup E(G)\cup\{\{a,b\}: a\in V(F), b\in V(G)\},$$
that is, $F\oplus G$ is the disjoint union of graphs $F$ and $G$ with all edges
added that has one endpoint in $V(F)$ and the other in $V(G)$.
\end{defi}
Shannon \cite{Sha56} proved that
$C_{\rm OR}(F\oplus G)\ge C_{\rm OR}(F)+C_{\rm OR}(G)$ for all pairs of
graphs $F$ and $G$ and formulated the conjecture that equality always
holds. This was refuted by Alon \cite{Alon} only four decades after the
question had been posed.

Two of the
graph parameters, the fractional chromatic number $\chi_f(G)$ and the Lov\'asz
theta number of the complementary graph (or strict vector chromatic number) $\bar{\vartheta}(G)=\vartheta(\bar{G})$ that
we mentioned in the Introduction as examples for graph parameters satisfying the Hedetniemi-type equality, i.e., for which we have
$$\chi_f(F\times G)=\min\{\chi_f(F),\chi_f(G)\}$$ and
$$\bar{\vartheta}(F\times G)=\min\{\bar{\vartheta}(F),\bar{\vartheta}(G)\},$$
respectively, also satisfy
$$\chi_f(F\cdot G)=\chi_f(F)\chi_f(G),\ \  \bar{\vartheta}(F\cdot
G)=\bar{\vartheta}(F)\bar{\vartheta}(G),$$
and
$$\chi_f(F\oplus G)=\chi_f(F)+\chi_f(G),\ \  \bar{\vartheta}(F\oplus
G)=\bar{\vartheta}(F)+\bar{\vartheta}(G).$$
(We remark that the chromatic number also trivially satisfies the second type
of these
equalities, i.e. $\chi(F\oplus G)=\chi(F)+\chi(G)$, but it does not satisfy the
first one. At the same time it does satisfy the inequality $\chi(F\cdot G)\le
\chi(F)\chi(G)$.)

\medskip
\par\noindent
The following notion (adapted again for our complementary language) is from
Zuiddam's recent paper \cite{Zuiddam} that borrows the terminology from
Strassen's work \cite{Str} which it is based on.

\begin{defi}\label{defi:asysp}
Let $S$ be a collection of graphs closed under the join and the OR-product
operations and containing the single vertex graph $K_1$. The {\em asymptotic
  spectrum} $Y(S)$ of $S$ is the set of all maps $\varphi: S\to R_{\ge 0}$ which satisfy for all $G,H\in S$ the following four properties:
\begin{itemize}
\item $\varphi(K_1)=1$

\item $\varphi(G\oplus H)=\varphi(G)+\varphi(H)$

\item $\varphi(G\cdot H)=\varphi(G)\cdot\varphi(H)$

\item if $G\to H$, then $\varphi(G)\le \varphi(H)$.

\end{itemize}

\end{defi}

Note that every $\varphi\in Y(S)$ provides an upper bound for the Shannon
OR-capacity of graphs in $S$. Indeed, the first two properties imply
$\varphi(K_n)=n$ for every $n$, which together with the fourth property imply
$\omega(G)\le\varphi(G)$ for every $G\in S$ and $\varphi \in Y(S)$. Using also
the third property we obtain
$$C_{\rm
  OR}(G)=\lim_{t\to\infty}\sqrt[t]{\omega(G^t)}\le\lim_{t\to\infty}\sqrt[t]{\varphi(G^t)}=\lim_{t\to\infty}\sqrt[t]{[\varphi(G)]^t}=\varphi(G).$$

Note also that $C_{\rm OR}(G)$ itself does not belong to $Y(S)$ by the above
mentioned results of Haemers \cite{Haemers} and Alon \cite{Alon}.

Building on Strassen's theory of asymptotic spectra, Zuiddam proved the
following surprising result (cf. also \cite{Fritz} for an independently found
weaker version).

\smallskip\par\noindent
\begin{thm}\label{thm:Zuid} (Zuiddam's theorem \cite{Zuiddam})
Let $S$ be a collection of graphs closed under the join and the OR-product
operations and containing the single vertex graph $K_1$. Let $Y(S)$ be the
asymptotic spectrum of $S$.
Then for all graphs $G\in S$ we have
$$C_{\rm OR}(G)=\min_{\varphi\in Y(S)}\varphi(G).$$
\end{thm}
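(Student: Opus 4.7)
The easier inequality $C_{\rm OR}(G)\le \min_{\varphi\in Y(S)}\varphi(G)$ is essentially already established in the discussion preceding the theorem: the four axioms force $\varphi(K_n)=n$ (from additivity under join, starting from $\varphi(K_1)=1$), then $\omega(H)\le\varphi(H)$ for every $H\in S$ (from the homomorphism $K_{\omega(H)}\to H$ combined with homomorphism monotonicity), and finally $\omega(G^t)^{1/t}\le\varphi(G^t)^{1/t}=\varphi(G)$ (from OR-multiplicativity), whose limit as $t\to\infty$ is $C_{\rm OR}(G)$.

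The reverse inequality is the substantive content of the theorem, and my plan is to derive it as an application of Strassen's spectral theorem for preordered commutative semirings. The first step is to verify that the graph semiring fits Strassen's framework. Adjoining the empty graph as additive zero if necessary, $(S,\oplus,\cdot)$ is a commutative semiring with multiplicative unit $K_1$; associativity, commutativity and distributivity are routine from the vertex-set definitions. The relation $G\le H\Leftrightarrow G\to H$ is a preorder compatible with both operations, since a pair of homomorphisms $G_i\to H_i$ extends in the obvious way to $G_1\oplus G_2\to H_1\oplus H_2$ and to $G_1\cdot G_2\to H_1\cdot H_2$. Strassen's Archimedean-type axiom --- that for every $G$ and every nonempty $H$ in $S$ some $n\in\mathbb{N}$ satisfies $G\le H^{\oplus n}$ --- holds because $G\to K_{\chi(G)}$ and $K_{\chi(G)}\to H^{\oplus\chi(G)}$ (the latter just picking one vertex from each of the $\chi(G)$ joined copies of $H$, which are pairwise completely connected in the join).

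With these axioms in place, Strassen's theorem identifies the asymptotic subrank
\[
\underline Q(G)\ :=\ \lim_{t\to\infty}\,\max\{r\in\mathbb{N}:K_r\to G^t\}^{1/t}
\]
with $\min_{\varphi\in Y(S)}\varphi(G)$, the minimum being attained because the spectrum is compact in the topology of pointwise convergence. Since $K_r\to H$ is equivalent to $\omega(H)\ge r$, we have $\underline Q(G)=\lim_{t\to\infty}\omega(G^t)^{1/t}=C_{\rm OR}(G)$, which closes the argument. The genuine depth of the proof --- a Hahn--Banach/duality construction producing enough spectral points to witness every asymptotic inequality --- lies entirely inside Strassen's theorem, which I would invoke as a black box; that duality construction is where the real obstacle sits, while the work here reduces to verifying that the graph semiring under the homomorphism preorder fits Strassen's axioms.
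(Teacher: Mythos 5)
The paper does not prove Theorem~\ref{thm:Zuid}: it is stated as a cited result from Zuiddam's paper, so there is no in-paper proof to compare against. Your sketch is, however, a faithful outline of how Zuiddam actually obtains it. The easy inequality is handled exactly as the paper's surrounding discussion suggests (axioms force $\varphi(K_n)=n$, hence $\omega\le\varphi$, hence $C_{\rm OR}\le\varphi$ via multiplicativity). For the hard inequality, the reduction to Strassen's spectral theorem for preordered commutative semirings is indeed Zuiddam's route, and your verification that the graph semiring (join as addition, OR-product as multiplication, empty graph as zero, $K_1$ as one, homomorphism order as the preorder) is a commutative semiring with a compatible preorder satisfying the Archimedean condition $G\preceq \chi(G)\cdot H$ is correct. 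One small axiom you omit but would need to check is that the preorder restricted to the image of $\mathbb{N}$ (the complete graphs) is the usual order on $\mathbb{N}$, i.e. $K_n\to K_m\Leftrightarrow n\le m$; this is immediate but is part of Strassen's hypotheses. The identification of the asymptotic subrank $\lim_t\max\{r:K_r\to G^t\}^{1/t}$ with $C_{\rm OR}(G)$ is also correct. As you say, the genuine depth (the construction of enough spectral points via a Hahn--Banach/Positivstellensatz argument, and the compactness of the spectrum guaranteeing the minimum is attained) lives entirely inside the cited Strassen--Zuiddam machinery, so treating it as a black box is consistent with how the present paper uses the theorem.
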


\medskip\par\noindent
That is, Zuiddam's theorem states that the value of $C_{\rm OR}(G)$ is always
equal to the value of one of its ``nicely behaving'' upper bound
functions. Note that this would be trivial if $C_{\rm OR}(G)$ itself would be a
member of $Y(S)$, but we have already seen that this is not the case.
Zuiddam \cite{Zuiddam} gives a list of known elements of $Y(S)$. This list
(translated to our complementary language) includes the fractional chromatic
number, the Lov\'asz theta number of the complementary graph, the so-called
fractional Haemers bound (of the complementary graph) defined in
\cite{Blasiak} and further investigated in
\cite{BC, HTS}, and another parameter called fractional orthogonal rank
introduced in \cite{CMRSSW}. The fractional Haemers' bound also depends on a
field and as Zuiddam also remarks, a separation result
by Bukh and Cox \cite{BC} implies that this family of graph invariants has
infinitely many different elements.

\medskip\par\noindent
Now we are ready to prove our main result.

\medskip\par\noindent
\begin{thm}\label{thm:eitheror}
Either $$C_{\rm OR}(F\times G)=\min\{C_{\rm OR}(F),C_{\rm OR}(G)\}$$ holds for
graphs $F$ and $G$ or there exists some function $\varphi$ satisfying the
properties given in Definition~\ref{defi:asysp} for
which we have $$\varphi(F\times G)<\min\{\varphi(F),\varphi(G)\}.$$
\end{thm}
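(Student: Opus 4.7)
The plan is to prove the theorem by contrapositive: assume the first alternative fails, i.e., $C_{\rm OR}(F\times G)<\min\{C_{\rm OR}(F),C_{\rm OR}(G)\}$ (the reverse inequality is already Corollary~\ref{cor:cator}), and produce the desired $\varphi$ directly from Zuiddam's theorem.

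First I would fix a collection $S$ of graphs that is closed under join and OR-product, that contains $K_1$, and that contains all three graphs $F$, $G$, and $F\times G$. Taking $S$ to be the class of all finite simple graphs works, but if one prefers something smaller one may close $\{K_1,F,G,F\times G\}$ under $\oplus$ and $\cdot$. Either way, Zuiddam's theorem (Theorem~\ref{thm:Zuid}) applies to every graph in $S$ and in particular yields
\[
C_{\rm OR}(F\times G)\;=\;\min_{\varphi\in Y(S)}\varphi(F\times G).
\]
Because the minimum is attained, there exists some $\varphi^{\ast}\in Y(S)$ with $\varphi^{\ast}(F\times G)=C_{\rm OR}(F\times G)$.

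Next I would use the fact, recorded already in the excerpt just after Definition~\ref{defi:asysp}, that every $\varphi\in Y(S)$ is an upper bound on Shannon OR-capacity: for any graph $H\in S$ one has $C_{\rm OR}(H)\le\varphi(H)$. Applying this to $\varphi^{\ast}$ and to $F$ and $G$ gives $\varphi^{\ast}(F)\ge C_{\rm OR}(F)$ and $\varphi^{\ast}(G)\ge C_{\rm OR}(G)$, hence
\[
\min\{\varphi^{\ast}(F),\varphi^{\ast}(G)\}\;\ge\;\min\{C_{\rm OR}(F),C_{\rm OR}(G)\}\;>\;C_{\rm OR}(F\times G)\;=\;\varphi^{\ast}(F\times G),
\]
where the strict inequality is exactly the failure of the first alternative. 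This gives the strict inequality $\varphi^{\ast}(F\times G)<\min\{\varphi^{\ast}(F),\varphi^{\ast}(G)\}$ claimed in the second alternative, completing the argument.

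The proof really is just a repackaging of Zuiddam's theorem plus the $C_{\rm OR}\le\varphi$ inequality, so there is no serious obstacle; the only point worth checking carefully is that the chosen $S$ genuinely contains $F\times G$ (not automatic, since $S$ is required to be closed under $\oplus$ and $\cdot$ but not under the categorical product $\times$), which is why one must pick $S$ explicitly to include $F\times G$ as a member rather than hope to generate it from $F$ and $G$ via the permitted operations.
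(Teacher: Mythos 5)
Your proof is correct and follows essentially the same route as the paper's: apply Zuiddam's theorem to $F\times G$ inside a suitable $S$ containing $F$, $G$, and $F\times G$ to produce a $\varphi^{\ast}\in Y(S)$ with $\varphi^{\ast}(F\times G)=C_{\rm OR}(F\times G)$, then combine with $C_{\rm OR}\le\varphi^{\ast}$ on $F$ and $G$. The only cosmetic difference is that you argue by contrapositive directly, whereas the paper first shows that equality for $\varphi_0$ would force equality for $C_{\rm OR}$ (using $F\times G\to F$ to pin down the direction of the failed inequality) and then takes the contrapositive; your chain of inequalities delivers the strictness more immediately.
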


\medskip\par\noindent
In short, Theorem~\ref{thm:eitheror} states that either Shannon OR-capacity
satisfies the Hedetniemi-type equality, or if not,
then there is some much ``nicer behaving'' graph invariant, too, which also
violates it.

\smallskip\par\noindent
\proof
Consider two graphs $F$ and $G$
and let $S$ be
a class of graphs satisfying the conditions in Zuiddam's theorem and
containing all of $F$, $G$ and $F\times G$.
Then by Zuiddam's theorem there exists some $\varphi_0\in Y(S)$ for
which $$C_{\rm OR}(F\times G)=\varphi_0(F\times G).$$
Assume that $$\varphi_0(F\times G)=\min\{\varphi_0(F),\varphi_0(G)\}$$
holds. Then w.l.o.g. we may assume
$\varphi_0(F)\le\varphi_0(G)$ and thus $\varphi_0(F\times G)=\varphi_0(F).$

\smallskip\par\noindent
Since all elements in $Y(S)$ are upper bounds on Shannon OR-capacity, we also
have that
$$\min\{C_{\rm OR}(F),C_{\rm
  OR}(G)\}\le\min\{\varphi_0(F),\varphi_0(G)\}=\varphi_0(F).$$
But $$\varphi_0(F)=\varphi_0(F\times G)=C_{\rm OR}(F\times G),$$
so we have obtained
$$\min\{C_{\rm OR}(F),C_{\rm OR}(G)\}\le C_{\rm OR}(F\times G).$$
Since the opposite inequality is always true (by Corollary~\ref{cor:cator}) this
implies $$\min\{C_{\rm OR}(F),C_{\rm OR}(G)\}=C_{\rm OR}(F\times G).$$
Consequently, if the latter equality does not hold, then we must have
$\varphi_0(F\times G)\neq\min\{\varphi_0(F),\varphi_0(G)\}$ implying
$$\varphi_0(F\times G)<\min\{\varphi_0(F),\varphi_0(G)\}=\varphi(F_0)$$ by
$\varphi_0$ satisfying the fourth property in Definition~\ref{defi:asysp} and
the fact that $F\times G\to F$.
\hfill$\Box$

\medskip\par\noindent
{\bf Remark 1.} While the proof of Theorem~\ref{thm:eitheror}
  is rather simple
it may be worth noting how strong Zuiddam's theorem is on which it is
based. An illustration of this is given in the last fifteen minutes of
Zuiddam's lecture \cite{JZlect}, where he shows an equally simple proof of the
statement (translated to the language and notation we use here), that
$$C_{\rm OR}(F\cdot G)=C_{\rm OR}(F)C_{\rm OR}(G)\Leftrightarrow C_{\rm
  OR}(F\oplus G)=C_{\rm OR}(F)+C_{\rm OR}(G)$$ using his theorem.
In other words, in posession of Zuiddam's theorem Haemers' 1979 result
\cite{Haemers} about the non-multiplicativity of $C_{\rm OR}(G)$ with respect to
the OR-product already implies Alon's breakthrough result refuting Shannon's
conjecture that appeared only two decades later. $\Diamond$

\medskip\par\noindent
{\bf Remark 2.}
Graph parameters that satisfy the Hedetniemi-type equality, but violate the
conditions in Definition~\ref{defi:asysp} exist. A simple example is the
clique number that is not multiplicative with respect to the OR-product. (If
it was, then the Shannon-capacity problem would be trivial.)
A perhaps more artificial example is the reciprocal of the odd girth (taken to
be $0$ when the graph is bipartite) which
also satisfies the Hedetniemi-type equality but fails to do so with all but
the last one of the four conditions in Definition~\ref{defi:asysp}.
\hfill$\Diamond$

\section{A lower bound and identifying test cases}\label{sect:lbitc}

Due to the lack of knowledge of the Shannon capacity value for many graphs
(note that even that is not known whether the computational problem given by
it is decidable, see \cite{Alonsurv}), it is not entirely trivial how
to find a pair of graphs on which one could at least try checking whether
there is equality in Corollary~\ref{cor:cator} in any nontrivial way. In this
section we establish a general lower bound for $C_{\rm OR}(F\times G)$ and
present some graph pairs for which this lower bound is strictly smaller than the
upper bound $\min\{C_{\rm OR}(F),C_{\rm OR}(G)\}$. Whether either of the two
bounds is sharp in these cases remains an open problem.

\subsection{Lower bound}

\medskip\par\noindent
The following Proposition gives our lower bound.

\smallskip\par\noindent
\begin{prop}\label{prop:lb}
$$C_{\rm OR}(F\times G)\ge\max\{C_{\rm OR}(F'),C_{\rm OR}(G'): F'\subseteq F, F'\to G, G'\subseteq G, G'\to F\}.$$
\end{prop}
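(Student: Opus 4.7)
The plan is to exhibit, for each choice of $F'\subseteq F$ with $F'\to G$, an explicit graph homomorphism from $F'$ into $F\times G$, and then invoke Proposition~\ref{prop:hommon}; the lower bound for $G'\subseteq G$ with $G'\to F$ follows by the symmetric argument, and taking the maximum over all such $F'$ and $G'$ yields the stated bound.

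Fix an $F'\subseteq F$ together with a homomorphism $h:V(F')\to V(G)$. I would define the map $\phi:V(F')\to V(F\times G)$ by
$$\phi(x)=(x,h(x)).$$
The key verification is that $\phi$ is a graph homomorphism from $F'$ to $F\times G$. Suppose $\{x,y\}\in E(F')$. Since $F'$ is a subgraph of $F$, we have $\{x,y\}\in E(F)$; since $h$ is a homomorphism $F'\to G$, we have $\{h(x),h(y)\}\in E(G)$. By the definition of the categorical product, this gives $\{(x,h(x)),(y,h(y))\}\in E(F\times G)$, i.e.\ $\{\phi(x),\phi(y)\}\in E(F\times G)$. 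Hence $F'\to F\times G$, and Proposition~\ref{prop:hommon} yields $C_{\rm OR}(F')\le C_{\rm OR}(F\times G)$.

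The argument for $G'\subseteq G$ with a homomorphism $g:V(G')\to V(F)$ is completely symmetric: the map $x\mapsto (g(x),x)$ gives a homomorphism $G'\to F\times G$, so $C_{\rm OR}(G')\le C_{\rm OR}(F\times G)$. Taking the maximum of $C_{\rm OR}(F')$ and $C_{\rm OR}(G')$ over all admissible pairs completes the proof.

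There is essentially no obstacle here; the only point worth noting is that $\phi$ (respectively its symmetric counterpart) is automatically injective because the first (respectively second) coordinate already recovers the preimage, so $F'$ actually embeds as a subgraph of $F\times G$ rather than merely admitting a homomorphism into it. However, the full strength of injectivity is not needed, as homomorphism monotonicity of $C_{\rm OR}$ (Proposition~\ref{prop:hommon}) suffices for the inequality.
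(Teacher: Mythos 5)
Your proof is correct and takes essentially the same approach as the paper. The one cosmetic difference is that the paper first constructs the homomorphism $u\mapsto(u,f(u))$ from $F_0$ into $F_0\times G$ and then uses $F_0\times G\subseteq F\times G$ together with subgraph-monotonicity, whereas you construct the map $x\mapsto(x,h(x))$ directly into $F\times G$, compressing those two steps into one; the underlying "diagonal" map and the appeal to Proposition~\ref{prop:hommon} are identical.
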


\begin{proof}
Let $F_0$ denote the subgraph $F'$ of $F$ that admits a homomorphism to $G$ with largest value of
$C_{\rm OR}(F')$. Let $G_0$ be the analogous
subgraph of $G$ obtained when we exchange the letters $F$ and $G$ in the
previous sentence. The statement is equivalent to the inequality
$C_{\rm OR}(F\times G)\ge\max\{C_{\rm OR}(F_0),C_{\rm OR}(G_0)\}$.

Thus it is enough to show $C_{\rm OR}(F\times G)\ge C_{\rm OR}(F_0)$, the same
argument will prove $C_{\rm OR}(F\times G)\ge C_{\rm OR}(G_0)$ when exchanging
the role of $F$ and $G$.

This readily follows from Proposition~\ref{prop:hommon}.  Indeed, since
$F_0\times G\subseteq F\times G$ we have
$C_{\rm OR}(F\times G)\ge C_{\rm OR}(F_0\times G)\ge C_{\rm OR}(F_0)$, where
the last inequality is a consequence of Proposition~\ref{prop:hommon} and the
fact that $F_0\to G$ implies $F_0\to F_0\times G$.
The latter follows by observing that if $f$ is a homomorphism from $F_0$ to $G$, then $\{u,v\}\in E(F_0)$ implies $\{(u,f(u)),(v,f(v))\}\in E(F_0\times G)$, therefore $f': u\mapsto (u,f(u))$ is a homomorphism from $F_0$ to $F_0\times G$.
This completes the proof.
\end{proof}

\begin{cor}
If $F\to G$ then $C_{\rm OR}(F\times G)=C_{\rm OR}(F)$.
\end{cor}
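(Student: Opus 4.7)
The plan is to sandwich $C_{\rm OR}(F\times G)$ between matching upper and lower bounds, both equal to $C_{\rm OR}(F)$, using exactly the two results just established: Corollary~\ref{cor:cator} and Proposition~\ref{prop:lb}. Since the homomorphism $F\to G$ is part of the hypothesis, both bounds specialize very cleanly, and nothing further is needed.

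For the upper bound, I would apply Corollary~\ref{cor:cator} to get $C_{\rm OR}(F\times G)\le\min\{C_{\rm OR}(F),C_{\rm OR}(G)\}$, and then invoke Proposition~\ref{prop:hommon} on the hypothesis $F\to G$ to conclude $C_{\rm OR}(F)\le C_{\rm OR}(G)$. Hence the minimum on the right is $C_{\rm OR}(F)$, giving $C_{\rm OR}(F\times G)\le C_{\rm OR}(F)$.

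For the matching lower bound, I would apply Proposition~\ref{prop:lb} with the choice $F'=F$ in the maximum on the right-hand side. This choice is legitimate precisely because $F\subseteq F$ and $F\to G$ by hypothesis, so $F$ is one of the graphs over which the maximum is taken. Therefore $C_{\rm OR}(F\times G)\ge C_{\rm OR}(F)$, and combining with the upper bound yields the equality claimed.

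There is essentially no obstacle here; the statement is really a direct corollary designed to showcase a case where the lower bound of Proposition~\ref{prop:lb} meets the upper bound of Corollary~\ref{cor:cator}. The only thing worth being careful about is remembering to cite Proposition~\ref{prop:hommon} for the step $C_{\rm OR}(F)\le C_{\rm OR}(G)$, which is what collapses the minimum to $C_{\rm OR}(F)$.
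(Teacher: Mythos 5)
Your proof is correct and is exactly the paper's argument, merely spelled out: the paper states the corollary is an immediate consequence of Proposition~\ref{prop:lb} and Corollary~\ref{cor:cator}, and you supply the two specializations (taking $F'=F$ in the lower bound, and using Proposition~\ref{prop:hommon} to identify the minimum in the upper bound as $C_{\rm OR}(F)$). Nothing to add.
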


\begin{proof}
This is an immediate consequence of Proposition~\ref{prop:lb} and
Corollary~\ref{cor:cator}.
\end{proof}

\smallskip\par\noindent
For example, since a longer odd cycle always admits a
homomorphism to a shorter one (but not vice cersa) for arbitrary integers
$1\le k\le \ell$ we have
$C_{\rm OR}(C_{2k+1}\times C_{2\ell +1})=C_{\rm OR}(C_{2\ell +1})$. If $k=1, \ell=2$, then the above value is equal to
$C_{\rm OR}(C_5)=\sqrt{5}$ by the celebrated result of Lov\'asz
\cite{LL79} on the Shannon capacity of the $5$-cycle. We remark that the exact value of $C_{\rm OR}(C_{2\ell +1})$ is
unknown for all $\ell\ge 3$. A nontrivial result concerning these values is
proven by Bohman and Holzman \cite{BH} who showed that
$C_{\rm OR}(C_{2\ell +1})>2$ for every positive integer $\ell$.

\medskip
\par\noindent
Naturally, if we would like to ``test'' whether the inequality in
Corollary~\ref{cor:cator} can be strict then we need a pair of graphs $F$ and
$G$ for which the upper bound on $C_{\rm OR}(F\times G)$ provided by
Corollary~\ref{cor:cator} is strictly larger than the lower bound given in
Proposition~\ref{prop:lb}. As the exact value of Shannon capacity is known
only in a few nontrivial cases, finding such a pair is not a completely
trivial matter. We discuss this problem in the following subsection.

\subsection{Paley graphs and variants}

\medskip\par\noindent
\begin{defi}\label{defi:p17}
Let $q$ be an odd prime power satisfying $q\equiv 1 ({\rm mod}\ 4)$. The Paley graph
$P_q$ is defined on the elements of the finite field $F_q$ as vertices. Two
vertices form an edge if and only if their difference in $F_q$ is a square in
$F_q$.
\end{defi}

\medskip
\par\noindent
Note that the condition on $q$ ensures that $-1$ has a square root in $F_q$ and thus
$a-b$ is a square in $F_q$ if and only if $b-a$ is. Thus the definition is
indeed meaningful and results in a(n undirected) graph.
In the special case when $q$ itself is a prime number $p$, edges of $P_p$ are
between vertices whose difference is a quadratic residue modulo $p$.
\smallskip
\par\noindent
We also remark that $P_5$ is just the five-cycle $C_5$ and the graph $P_{17}$
is well-known to be the unique graph on $17$ vertices not having either a
clique or an independent set of size $4$, thus establishing the sharp lower
bound on the largest known diagonal Ramsey number $R(4,4)=18$ \cite{GG}. (In fact, $P_5\cong C_5$ is the unique graph establishing $R(3,3)\ge 6$. For more on the connection between Ramsey numbers and Shannon capacity, cf. \cite{AO, EMcT, NR}.)
\medskip
\par\noindent
Paley graphs are well-known to be self-complementary, vertex-transitive, and
edge-transitive, cf. e.g. \cite{Zhang, Peisert}. The first two of these
properties make them particularly useful for us by the following theorem of
Lov\'asz.

\begin{thm}\label{LLsc} (Lov\'asz \cite{LL79})
If $G$ is a vertex-transitive self-complementary graph on $n$ vertices, then
$$C_{\rm OR}(G)=\sqrt{n}.$$
\end{thm}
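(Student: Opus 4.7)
The plan is to prove the two matching bounds $C_{\rm OR}(G)\le\sqrt n$ and $C_{\rm OR}(G)\ge\sqrt n$ separately, using vertex-transitivity for the former and self-complementarity for the latter.

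For the upper bound I would invoke two classical properties of the Lov\'asz theta number $\vartheta$. First, the original Lov\'asz bound gives $C_{\rm OR}(G)=C_{\rm AND}(\bar G)\le\vartheta(\bar G)$. Second, for every vertex-transitive graph $H$ on $n$ vertices Lov\'asz proved the identity $\vartheta(H)\vartheta(\bar H)=n$. Applying this to $G$ and using $G\cong\bar G$ (so $\vartheta(G)=\vartheta(\bar G)$) we immediately obtain $\vartheta(\bar G)^2=n$, hence $C_{\rm OR}(G)\le\vartheta(\bar G)=\sqrt n$. This half of the argument is thus essentially a citation.

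For the lower bound I would build an explicit $n$-clique in the OR-square $G^2=G\cdot G$, using the self-complementarity but not the vertex-transitivity. Fix an isomorphism $\sigma:G\to\bar G$ and consider the diagonal-like set
$$K=\{(u,\sigma(u)): u\in V(G)\}\subseteq V(G)\times V(G)=V(G^2).$$
For any two distinct $u,v\in V(G)$, either $\{u,v\}\in E(G)$, in which case $(u,\sigma(u))$ and $(v,\sigma(v))$ are adjacent in $G^2$ through the first coordinate, or $\{u,v\}\notin E(G)$, which by the defining property of $\sigma$ yields $\{\sigma(u),\sigma(v)\}\notin E(\bar G)$ and therefore $\{\sigma(u),\sigma(v)\}\in E(G)$, so that the two vertices are adjacent in $G^2$ through the second coordinate. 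In either case they are adjacent, so $K$ is an $n$-clique and $\omega(G^2)\ge n$.

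To finish the lower bound I would use the standard fact that cliques in OR-products combine: if $A$ is a clique in $G^a$ and $B$ is a clique in $G^b$, then $A\times B$ is a clique in $G^{a+b}$, which gives $\omega(G^{2s})\ge\omega(G^2)^s\ge n^s$ by iteration. Consequently
$$C_{\rm OR}(G)=\lim_{t\to\infty}\sqrt[t]{\omega(G^t)}\ge\lim_{s\to\infty}\sqrt[2s]{n^s}=\sqrt n,$$
and combining with the upper bound proves the theorem. The only genuinely non-routine step is the $K$ construction; everything else is an application of properties already recorded earlier in the paper or in \cite{LL79}, so I would expect no real obstacle beyond checking that the pair of cases in the case analysis really covers every distinct $u,v$.
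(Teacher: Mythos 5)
Your proof is correct. The paper does not reprove this result (it cites Lov\'asz \cite{LL79} directly), but your argument is exactly the standard one from that paper, translated into the OR-capacity language: the upper bound $C_{\rm OR}(G)\le\vartheta(\bar G)=\sqrt{n}$ via the Lov\'asz bound together with the vertex-transitive identity $\vartheta(H)\vartheta(\bar H)=n$ and $\vartheta(G)=\vartheta(\bar G)$, and the lower bound via the diagonal clique $\{(u,\sigma(u))\}$ in $G^2$, which the present paper itself invokes later when it observes $C_{\rm OR}(Q_{p-1})\ge\sqrt{p-1}$ for the self-complementary graph $Q_{p-1}$. Your observation about which hypothesis each half of the argument uses (vertex-transitivity for the upper bound, self-complementarity for the lower) is also accurate.
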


\medskip
\par\noindent
Thus we have $C_{\rm OR}(P_q)=\sqrt q$ for all prime
powers $q\equiv 1 ({\rm mod}\ 4)$.

\medskip
\par\noindent
Let $p\equiv 1 ({\rm mod}\ 4)$ be a prime number.
The value of the clique number (or equivalently, the independence number) of
$P_p$ is not known and determining it is a well-known unsolved problem in number theory, the
conjectured value being $O((\log  p)^2)$, cf. \cite{LLGG}. It is not hard to
see (and also follows from Theorem~\ref{LLsc} above) that
$\omega(P_p)\le \sqrt{p}$. Improving this bound by $1$ for infinitely
many primes $p$ was already a nontrivial task that was achieved by Bachoc,
Matolcsi, and Ruzsa \cite{BMR} only a few years ago. Recently Hanson and
Petridis \cite{HP} managed to improve this substantially by
proving the general upper bound
$$\omega(P_p)\le\frac{\sqrt{2p-1}+1}{2}<\frac{\sqrt{2p}+1}{2}.$$
Notice that this upper bound immediately implies
$$\chi(P_p)>\frac{2p}{\sqrt{2p}+1}$$
by $\chi(P_p)\ge\frac{|V(P_p)|}{\alpha(P_p)}=\frac{|V(P_p)|}{\omega(P_p)}.$
This in turn gives $\chi(P_p)>\sqrt{p}+1\ge\lceil{\sqrt{p}}\rceil,$ whenever
$p>20$ meaning
that for primes at least $20$ the largest subgraph of $P_p$ that can be
colored with $\lceil{\sqrt{p}}\rceil$ colors has strictly fewer vertices
than $P_p$ itself. There are only three primes of the form $4k+1$ below $20$: $5, 13, 17$.
As already mentioned above, $P_{17}$ is well-known from \cite{GG} to be the
graph establishing the largest known diagonal Ramsey number $R(4,4)=18$, that is, it
has no clique or independent set on more than $3$ vertices. In fact, this also follows
from the Hanson-Petridis bound as well as $\alpha(P_{13})\le 3$. Therefore we have
$\chi(P_{17})\ge\lceil\frac{17}{3}\rceil=6>\lceil\sqrt{17}\rceil=5$ as well as
$\chi(P_{13})\ge\lceil\frac{13}{3}\rceil=5>\lceil\sqrt{13}\rceil=4$.
(Obviously, the analogous inequality does not hold for $P_5\cong C_5$.)
\medskip
\par\noindent
This suggests that if we knew that deleting a vertex of a Paley graph $P_p$
its Shannon capacity becomes already smaller than $C_{\rm OR}(P_p)=\sqrt{p}$,
then we could conclude that for $m=\lceil{\sqrt{p}}\rceil$ the graph pair $(P_p,K_m)$ has the property, that the lower bound of Proposition~\ref{prop:lb} on
$C_{\rm OR}(P_p\times K_m)$ is strictly smaller than its upper bound from
Corollary~\ref{cor:cator}, which is $\sqrt{p}$ in this case. (Here we use that
$\max\{C_{OR}(H): H\subseteq P_p, H\to K_m\}=\max\{C_{OR}(H): H\subseteq P_p, \chi(H)\le m\}$, while $\max\{C_{OR}(T): T\subseteq K_m, T\to P_p\}\le m-1$ if
$\omega(P_p)<m$.)
Let us denote the graph we obtain from $P_p$ after deleting a vertex by $Q_{p-1}$. (Note that by the vertex-transitivity of $P_p$ it does not matter which vertex is deleted.)
Unfortunately, we do not have a proof that $C_{\rm OR}(Q_{p-1})<C_{\rm
  OR}(P_p)$ always holds. Though we believe it is true for any prime $p$ (of the form $4k+1$) it is
clear that this will not follow simply from the symmetry properties of $P_p$
(that one might believe at first sight), as the analogous statement is not
true for all prime powers $q$. Indeed, if $q=p^k$ for $k$ even, then it is
known that $\omega(P_q)=\sqrt{q}$ \cite{Blokhuis}, (cf. also \cite{BMR}) and
by vertex-transitivity this immediately implies $\omega(Q_{q-1})=\sqrt{q}$ as
well, that further implies $C_{\rm OR}(Q_{q-1})=\sqrt{q}$ (where analogously
to $Q_{p-1}$, $Q_{q-1}$ denotes the graph obtained by deleting a vertex of $P_q$).

\medskip
\par\noindent
Although we do not have a general proof for $C_{\rm OR}(Q_{p-1})<C_{\rm OR}(P_p)$,
using the computability of the Lov\'asz theta number we can decide in several
cases that this indeed happens. With the help of the online available Python code \cite{Stcode} to compute the Lov\'asz theta
number for specific graphs\footnote{I am grateful to Anna Gujgiczer for
  showing me how this code can be used and also for providing several of the
  required calculations.}, we can obtain for example the following values for
the first five relevant numbers:
$$\bar{\vartheta}(Q_{12})\approx 3.4927 < \sqrt{13},\ \
\bar{\vartheta}(Q_{16})\approx 4.0035 < \sqrt{17},$$
$$\bar{\vartheta}(Q_{28})\approx 5.3069 < \sqrt{29},\ \
\bar{\vartheta}(Q_{36})\approx 6.0025 < \sqrt{37},\ \
\bar{\vartheta}(Q_{40})\approx 6.3493 < \sqrt{41}.$$

\medskip
\par\noindent
It is worth calculating the Lov\'asz theta number also for graphs we obtain when deleting two vertices of a Paley graph. By the edge-transitivity of $P_p$ there are only two non-isomorphic such graphs (that are complementary to each other). We denote the one obtained when deleting two adjacent vertices by $Z_{p-2}^{(a)}$, and the one obtained when deleting two non-adjacent vertices by $Z_{p-2}^{(n)}$. Since $p$ is odd and $P_p$ is self-complementary, $Q_{p-1}$ is self-complementary as well (cf. \cite{Sachs}). Since any self-complementary graph $G$ on $n$ vertices has a clique of size $n$ in its second OR power (simply consider the vertices $(v,f(v))$, where $f$ is a complementing permutation), we have
$$C_{\rm OR}(Q_{p-1})\ge\sqrt{p-1}.$$
In fact, it is a natural question whether equality holds here. This is obviously so for $p=5$, but seems to be open for all other relevant values of $p$. (Note that the above numerical values of $\bar{\vartheta}(Q_{p-1})$ are all strictly larger than $\sqrt{p-1}$.)

\smallskip\par\noindent
By the foregoing, in case
$\max\{\bar{\vartheta}(Z_{p-2}^{(a)}),\bar{\vartheta}(Z_{p-2}^{(n)})\}<\sqrt{p-1}$,
it
implies $$\max\{C_{\rm OR}(Z_{p-2}^{(a)}),C_{\rm OR}(Z_{p-2}^{(n)})\}<\sqrt{p-1},$$ and thus  the graph pair $(Q_{p-1},K_{\ell})$
with $\ell=\lceil\sqrt{p-1}\rceil$ also has the property that our lower and
upper bounds do not coincide for $C_{\rm OR}(Q_{p-1}\times K_{\ell})$ provided
that $K_{\ell}\not\to Q_{p-1}$ (that we are ensured of by the Hanson-Petridis
upper bound on $\omega(P_p)$) and $Q_{p-1}\not\to K_{\ell}$, that is, $\chi(Q_{p-1})>\ell$. The latter condition is not yet true for $p=13$, but follows from the Hanson-Petridis bound \cite{HP} for all the larger relevant values of $p$. This is
particularly appealing when $\sqrt{p-1}$ is an integer itself. This happens in
several cases, starting (disregarding $p=2, 5$ that are not relevant for us) with $p=17, 37, 101$ (cf. sequence A002496 of The
Online Encyclopedia of Integer Sequences \cite{oeis}). Whether this sequence is
infinite (as it is believed to be) is a famous open problem in number theory
(one of the four problems called Landau's problems along with Goldbach's
conjecture, the twin-prime conjecture, and Legendre's conjecture).

\medskip
\par\noindent
Using again the Python code \cite{Stcode}, we obtain that
$$\max\{\bar{\vartheta}(Z_{15}^{(a)}),\bar{\vartheta}(Z_{15}^{(n)})\}=\min\{3.8726,
3.8849\}<4,$$
$$\max\{\bar{\vartheta}(Z_{35}^{(a)}),\bar{\vartheta}(Z_{35}^{(n)})\}=\min\{5.9128,
5.9251\}<6,$$
and
$$\max\{\bar{\vartheta}(Z_{99}^{(a)}),\bar{\vartheta}(Z_{99}^{(n)})\}=\min\{9.9496,
9.9574\}<10.$$

\par\noindent
Thus each of the graph pairs $(Q_{16},K_4), (Q_{36},K_6)$, and
$(Q_{100},K_{10})$ provide ``test cases'' for investigating the possibility of
equality in Corollary~\ref{cor:cator}. Let us stress again, that in the light
of Theorem~\ref{thm:eitheror} any proof showing for example
$C_{\rm OR}(Q_{16}\times K_4)<4$ would imply the existence of a graph
parameter that satisfies all the four conditions in
Definition~\ref{defi:asysp} and yet fails to satisfy the Hedetniemi-type
equality.

\section{Acknowledgement}
Many thanks are due to Anna Gujgiczer for calculating the values of
$\bar{\vartheta}$ for some relevant subgraphs of Paley graphs with the help of
Stahlke's code \cite{Stcode} and for explaining me how this code can be used.
I also thank Kati Friedl for a useful discussion.

\end{document}